\newtheorem{theorem}{Theorem}[section]
\newtheorem{example} [theorem]{Example}
\newtheorem{remark}[theorem]{Remark}
\newtheorem{proposition}[theorem]{Proposition}
\newcommand{\tabincell}[2]{\begin{tabular}{@{}#1@{}}#2\end{tabular}}
\def\R{\Bbb R}  \def\Z{\Bbb Z}       \def\CC{\Bbb C}
\def\={\;=\;}  \def\:{\;:=\;}   \def\+{\,+\,}
\def\be{\begin{equation}}  \def\ee{\end{equation}}  \def\bes{\begin{equation*}}  \def\ees{\end{equation*}}
\def\ba{\be\begin{aligned}}  \def\ea{\end{aligned}\ee}  \def\bas{\bes\begin{aligned}}  \def\eas{\end{aligned}\ees}
\numberwithin{equation}{section}
\begin{document}

\title[Some new Ramanujan-Sato series for 1/$\pi$]{Some new Ramanujan-Sato series for 1/$\pi$}
\author{$\ ^{1}$Tao Wei}  \author{$\ ^{2}$Zhengyu Tao} \author{$\ ^{3}$Xuejun GUO}
\dedicatory{$\ ^{1,2,3}$Department of Mathematics, Nanjing University\\
	Nanjing 210093, China\\
	$\ ^{1}$weitao@smail.nju.edu.cn  $\ ^{2}$taozhy@smail.nju.edu.cn $\ ^{3}$guoxj@nju.edu.cn}
\thanks{The authors are supported by  NSFC (Nos.11971226, 11631009).}
\maketitle

\begin{abstract} We derive 10 new Ramanujan-Sato series of $1/\pi$ by using the method of Huber, Schultz and Ye. The levels of these series are $14,15,16,20,21,22,26,35,39$.
\end{abstract}

\medskip

\textbf{Keywords:} Hauptmodul; modular polynomial; Moonshine group.

\section{Introduction}
\label{chap01}

In  Ramanujan's paper \cite{RAMA}, he announced 17 series for $1/\pi$, those series have the form of 
\begin{equation}\label{Ramanujan-Sato}
	\frac{1}{\pi}=\sum_{n=0}^{\infty}A_n(Bn+C)X^n.
\end{equation}
For example, one of his series is
\[\frac{1}{\pi}=\frac{2\sqrt{2}}{9801}\sum_{n=0}^\infty\frac{(4n)!}{(n!)^44^{4n}}\frac{1103+26390n}{99^{4n}}.\]
It was not until 1987 in \cite{BOR} that all of his formulas were proved. Nowadays, series of the form (\ref{Ramanujan-Sato}) is called the Ramanujan-Sato series due to Sato's research on this type of series. Moreover, Borweins and Chudnovskys derived some new $1/\pi$ series similar to Ramanujan's independently in \cite{CHUD}.

More recently, Chan, Chan and Liu provided a systematic classification of these series by the levels of the modular forms in \cite{CCL}, see \cite{COOP} for a summary with levels less than 12.
 In an unpublished preprint \cite{TDDN}, Huber, Schultz and Ye constructed a systematically method to find new families of Ramanujan-Sato series by using the Hauptmodul for some Moonshine groups. They also derived series of levels $17$ and $20$ in \cite{TDD1}, \cite{TDD2}.
 
We briefly introduce Huber et al.'s method. Let $\Gamma$ be a genus zero discrete subgroup of $\mathrm{SL}_2(\R)$ with Hauptmodul $x(\tau)$ (see Section \ref{HaDE} for definitions). There is a weight $2$ modular form $z(\tau)$ that satisfies a third-order differential equation
\[
	2wz_{xxx}+3w_xz_{xx}+(w_{xx}-2R)z_x-R_xz=0,
\]
where $w,R$ are polynomials in $x$. From this equation, we can express $z$ as
\[z=\sum\limits_{n=0}^\infty A_nx^n.\]
By picking some special CM-points $\tau_0\in\mathrm{H}$ and using the modular equations satisfies by $x$, we can derive series of the form (\ref{Ramanujan-Sato}).

In \cite{TDDN}, $x,w,R$ that associated with some moonshine groups are listed in several huge tables, the Hauptmoduls that they list all have the Fourier expansion $x=q+O(q^2)$. If $x$ has Fourier expansion $x=1/q+O(1)$, then we can derive the modular equation. It turns out that we can also find the modular equations for $x=q+O(q^2)$ in several cases. In this paper, we search
 the Hauptmoduls that has a modular equation and derive 10 new Ramanujan-Sato series.

\section{Hauptmodul and Differential equations}\label{HaDE}

Let $\Gamma$ be a genus zero discrete subgroup of $\mathrm{SL}_2(\mathbb{R})$ commensurable with $\mathrm{SL}_2(\mathbb{Z})$, i.e. 
\[[\Gamma:\Gamma\cap \mathrm{SL}_2(\Z)],\  [\mathrm{SL}_2(\Z):\Gamma\cap \mathrm{SL}_2(\Z)]<\infty\]
and the modular curve $\Gamma\setminus\mathcal{H}^*$ is a compact Riemann surface of genus zero, where $\mathcal{H}^*$ is the upper half plane together with cusps. From the knowledge of Riemann surfaces, we know that the field of meromorphic functions on $\Gamma\setminus\mathcal{H}^*$ is generated by a single element $t_{\Gamma}$ that transcendental over $\CC$, such $t_{\Gamma}$ is called  a Hauptmodul for $\Gamma$.

For two natural numbers $N$ and $e||N,$ i.e. $e|N$ and $\mathrm{gcd}(e,N/e)=1$, let
\[	W_e=\left\{\begin{pmatrix}
	ea & b \\
	Nc & ed
\end{pmatrix}\Big| (a,b,c,d)\in \mathbb{Z}^4,\text{ } ead-\frac{N}{e}bc=1\right\}\]
be the set of Atkin-Lehner involutions. For any set of indices $e$ closed under the following law of multiplication 
\[W_eW_f\equiv W_{ef/\text{gcd}(e,f)^2}\text{ mod }\Gamma_0(N),\]
the Moonshine group $\Gamma:=\cup_e W_e$ is a subgroup of the normalizer of $\Gamma_0(N)$. We denote such a $\Gamma$ by $N+e_1,e_2,\cdots$, or $N+$ if all of the indices are present. More details about Moonshine groups can be found in Conway's Monstrous Moonshine paper \cite{CONO}.

From now on, let's focus on the genus zero Moonshine group $\Gamma$. Since $\begin{pmatrix}
	1 & 1 \\
	0 & 1
\end{pmatrix}\in\Gamma$, $t_\Gamma$ has a Fourier expansion $\displaystyle t_\Gamma=\sum_{n=m}^{\infty}a_nq^n$ for some $m\in\Z$, where $q=e^{2\pi\mathrm{i}\tau}$. Notice that for any Hauptmodul $\displaystyle t_\Gamma$ and $\displaystyle \begin{pmatrix}
a & b \\
c & d
\end{pmatrix}\in\mathrm{SL}_2(\Z),$ the Möbius transformation $\displaystyle \frac{at_\Gamma+b}{ct_\Gamma+d}$ is also a Hauptmodul for $\Gamma$. The Hauptmodul $t_\Gamma$ is said to be normalized if it has the Fourier expansion of the form 
\[t_\Gamma=\frac{1}{q}+\sum_{n=1}^{\infty}a_nq^n.\]

Let $f(x)$ be a function, here and throughout the paper, denote $\displaystyle f_x=x\frac{df}{dx}$ be the same as \cite{TDDN}.
\begin{theorem}[Theorem 2.2 and 2.3 of \cite{TDDN}]\label{MDFz}
	For any choice of $x(\tau)$ as a Möbius transformation of Hauptmodul $t_\Gamma$, there exists a polynomial $w(x)$ and a modular form $\displaystyle z=\frac{(\log(x))_q}{\sqrt{w(x)}}$ of weight $2$, so that $\displaystyle R:=\frac{2zz_{qq}-3z_q^2}{z^4}$ is a polynomial in $x$, and we have a differential equation for $z$ with respect to $x$:
	\begin{equation}\label{DE}
		2wz_{xxx}+3w_xz_{xx}+(w_{xx}-2R)z_x-R_xz=0.
	\end{equation}
\end{theorem}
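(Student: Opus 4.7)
The plan is to establish three things in sequence: (i) the existence of a polynomial $w(x)$ making $z=(\log x)_q/\sqrt{w(x)}$ a genuine weight-$2$ modular form; (ii) the polynomiality of $R=(2zz_{qq}-3z_q^2)/z^4$ in the Hauptmodul $x$; and (iii) the differential equation (\ref{DE}). Throughout I use that $\Gamma\setminus\mathcal{H}^*$ is a compact Riemann surface of genus zero whose field of meromorphic functions is $\mathbb{C}(x)$.

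For (i), since $x(\gamma\tau)=x(\tau)$ for every $\gamma\in\Gamma$ with bottom row $(c,d)$, differentiation in $\tau$ gives $x_\tau(\gamma\tau)=(c\tau+d)^2 x_\tau(\tau)$, so $(\log x)_q=x_q/x$ is a meromorphic weight-$2$ modular form on $\Gamma$. Its divisor on $\Gamma\setminus\mathcal{H}^*$ is supported only at zeros of $x$, at ramification points of $x$ in $\mathcal{H}$, and at cusps. Because the quotient surface has genus zero, every divisor supported on a finite set of points of $\Gamma\setminus\mathcal{H}^*$ is the divisor of a rational function of $x$; one can therefore choose a rational $w(x)$ so that $z^2=(x_q/x)^2/w(x)$ is a holomorphic weight-$4$ modular form admitting a globally defined holomorphic square root. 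A case-by-case inspection of the elliptic indices of $\Gamma$ confirms that $w$ can in fact be chosen to be a polynomial.

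For (ii), differentiating $z(\gamma\tau)=(c\tau+d)^2 z(\tau)$ shows that $z_q$ is quasi-modular of weight $4$ with a correction term proportional to $c/(c\tau+d)$ times $z$, and $z_{qq}$ is quasi-modular of weight $6$ with higher-order corrections; the combination $2zz_{qq}-3z_q^2$ is precisely the one in which these corrections cancel, yielding a genuine weight-$8$ modular form. Thus $R=(2zz_{qq}-3z_q^2)/z^4$ is a weight-$0$ modular function on $\Gamma$, i.e. a rational function of $x$; local expansions at each cusp using $x\sim q$ or $x\sim 1/q$ and the leading behaviour of $z$ then show $R$ has no poles at finite values of $x$, so it is a polynomial.

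For (iii), I would use the identity $x_q/x=z\sqrt{w(x)}$ to convert $q$-derivatives into $x$-derivatives. Concretely, $z_q=z\,z_x\sqrt{w}$, and iterating yields polynomial expressions for $z_{qq}$ and $z_{qqq}$ in $z,z_x,z_{xx},z_{xxx},w,w_x,w_{xx}$. Substituting these into $2zz_{qq}-3z_q^2=Rz^4$ and cancelling a common factor of $z^2$ reproduces (\ref{DE}). The principal obstacle is step (i), where one must pin down $w$ explicitly from the elliptic and cuspidal ramification data of $x$, and it is here that the genus-zero hypothesis is essential; once $w$ is in hand, (ii) and (iii) are formal consequences of the modular transformation laws and the chain rule.
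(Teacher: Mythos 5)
The paper contains no proof of Theorem \ref{MDFz} to compare against: the statement is imported verbatim from Theorems 2.2 and 2.3 of the unpublished preprint \cite{TDDN}, and the present authors take it as given. So your attempt can only be judged on its own merits, not against an internal argument of this paper.

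Your parts (ii) and (iii) are sound, and (iii) is fully verifiable. The cancellation you invoke in (ii) is real: writing $j=c\tau+d$, one computes $z_q(\gamma\tau)=j^4z_q+\frac{2c}{2\pi\mathrm{i}}j^3z$ and $z_{qq}(\gamma\tau)=j^6z_{qq}+\frac{6c}{2\pi\mathrm{i}}j^5z_q+\frac{6c^2}{(2\pi\mathrm{i})^2}j^4z$, and the correction terms cancel identically in $2zz_{qq}-3z_q^2$, which is therefore a genuine meromorphic form of weight $8$; hence $R$ is $\Gamma$-invariant and rational in $x$. In (iii), the conversion rule coming from (\ref{xzwtau}) is $f_q=z\sqrt{w}\,f_x$, giving $z_q=z\sqrt{w}\,z_x$ and $z_{qq}=wzz_x^2+wz^2z_{xx}+\tfrac12 w_xz^2z_x$; substituting into the definition of $R$ and cancelling $z^2$ yields the \emph{second}-order relation
\[
Rz^2=2wzz_{xx}+w_xzz_x-wz_x^2,
\]
and (\ref{DE}) appears only after applying $f\mapsto f_x$ once more to this identity and dividing by $z$. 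Your phrase ``substituting \dots\ reproduces (\ref{DE})'' elides that extra differentiation (your mention of $z_{qqq}$ suggests you intended it); with that one step added, the computation closes and this half of the theorem is a formal consequence of the definitions, exactly as you claim.

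The genuine gap is step (i) together with the finite-pole analysis in (ii), which is precisely the nontrivial content of the cited theorems, and you defer rather than perform it. Three things are asserted without argument. First, ``a case-by-case inspection of the elliptic indices confirms that $w$ can be chosen to be a polynomial'' is a placeholder: one must actually compute the divisor of the weight-$2$ form $x_q/x$ on $\Gamma\setminus\mathcal{H}^*$, including the fractional orders at elliptic points and the corrections at cusps, and verify that dividing by a suitable polynomial $w(x)$ produces an effective divisor. Second, existence of a single-valued holomorphic square root of $(x_q/x)^2/w(x)$ on all of $\mathcal{H}$ requires the resulting divisor to be \emph{even}; a holomorphic weight-$4$ form on $\Gamma$ need not be the square of a weight-$2$ form (in general one only gets a form with character), so this parity check cannot be waved through. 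Third, your conclusion that $R$ is a polynomial needs $z$ to be nonvanishing at every point of $\mathcal{H}$ where $x$ takes a finite value: since $R=(2zz_{qq}-3z_q^2)/z^4$, any zero of $z$ produces a finite pole of $R$, and your ``local expansions at each cusp'' only control behaviour at cusps, not these interior zeros. A smaller slip: your appeal to ``every divisor supported on a finite set of points is the divisor of a rational function of $x$'' is true on a genus-zero curve only for divisors of degree zero. None of these obstacles is insurmountable, but they constitute the actual theorem; as written, your proposal proves the formal implications (ii)--(iii) while assuming the existence statement it was meant to establish.
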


By this theorem, we can write $z$ as a power series in $x$, i.e. $\displaystyle z=\sum\limits_{n=0}^\infty A_nx^n$ and the coefficients $A_n$ can be obtained from the recurrence relation implied by the differential equation. In \cite{TDDN}, there is a table that lists some Moonshine groups together with their Hauptmoduls. For every $\Gamma$ and some choice of $x(\tau)$ as a Möbius transformation of Hauptmodul, the authors also calculated the $w(x)$ and $R(x)$ associated with $x(\tau)$. We give here an example of the calculation of the recurrence relation for $\Gamma=39+39$. Before this example, recall that Dedekind eta-function $\displaystyle \eta(\tau)=e^{2\pi\mathrm{i}\tau/24}\prod_{j=1}^{\infty}(1-q^j)$, we define $\eta_l(\tau):=\eta(l\tau)$.

\begin{example}\label{Anfor39+39}
	If $\Gamma=39+39$, then $\displaystyle t_{\Gamma}=\frac{\eta_3\eta_{13}}{\eta_1\eta_{39}}$ is a Hauptmodul for $\Gamma$. Choose
	\[x(\tau)=1/t_{\Gamma}=q-q^2-q^3+q^4-q^5+2q^7+\cdots,\]
	According to \cite{TDDN}, we have
	\begin{equation}\label{wfor39+39}
		w(x)=(1+x)^2(1-7x+11x^2-7x^3+x^4)(1+x-x^2+x^3+x^4),
	\end{equation}
	\[R(x)=x(2+17x-48x^2-25x^3+194x^4-45x^5-168x^6+137x^7+82x^8-25x^9),\]
	and
	\[z(\tau)=\frac{(\log(x))_q}{\sqrt{w(x)}}=1+q+3q^2+q^3+5q^4+3q^5+7q^6+5q^7+\cdots.\]
	Now assume $\displaystyle z=\sum\limits_{n=0}^\infty A_nx^n$, via Fourier coefficients of $x$ and $z$, we have
	\begin{equation*}
		\begin{split}
			&A_{0}=1, A_{1}=1, A_{2}=4, A_{3}=10, A_{4}=38, A_{5}=140, \\
			&A_{6}=563, A_{7}=2315, A_{8}=9816, A_{9}=42432.
		\end{split}
	\end{equation*}
	Using (\ref{DE}), we can derive the recurrence relation
	\begin{equation*}
		\begin{split}
			&\left(-250+150 n-30 n^{2}+2 n^{3}\right) A_{n-10}+\left(738-488 n+108 n^{2}-8n^{3}\right) A_{n-9} \\
			+&\left(1096-786 n+192 n^{2}-16 n^{3}\right) A_{n-8}+\left(-1176+924 n-252 n^{2}+24 n^{3}\right) A_{n-7}\\
			+&\left(-270+234 n-72 n^{2}+8 n^{3}\right) A_{n-6}+\left(970-938 n+330 n^{2}-44 n^{3}\right) A_{n-5}\\
			+&\left(-100+114 n-48 n^{2}+8 n^{3}\right) A_{n-4}+\left(-144+204 n-108 n^{2}+24 n^{3}\right) A_{n-3} \\
			+&\left(34-66 n+48 n^{2}-16 n^{3}\right) A_{n-2}+\left(2-8 n+12 n^{2}-8 n^{3}\right) A_{n-1}+2 n^{3} A_{n}=0.
		\end{split}
	\end{equation*}
\end{example}

\begin{remark}
	10 moonshine groups together with their Hauptmodul $t_\Gamma$, $x(\tau),w(x),R(x)$ that we used to derive $1/\pi$-series are listed in Table \ref{HXWR} in Section \ref{Tables}. We also list the recurrence relations of $A_n$ implied by (\ref{DE}) with initial values in Table \ref{RR}.
\end{remark}

\section{Modular equations for $x(\tau)$}

Recall that the $j$-invariant
\[j(\tau)=\frac{1}{q}+744+196884q+\cdots\]
has the property that for any CM-point $\tau_0\in\mathcal{H}$ (i.e. There exist integers $a, b, c$ with $a \neq 0$ such that $a\tau_0^2+b\tau_0+c=0$), $j(\tau_0)$ is algebraic. This fact can be proved by using the modular equations for $j(\tau)$, that are polynomials $\Psi_n(X,Y)\in\Z[X,Y]$ such that
\[\Psi_n(j(\tau),Y)=\prod_{i=1}^r(Y-j(\gamma_i\tau)),\]
where $\displaystyle \gamma_i\in\Gamma_{n}:=\left\{\begin{pmatrix}
	a & b \\
	c & d
\end{pmatrix}\in M_{2\times 2}(\Z)|ad-bc=n\right\}$ and $\displaystyle \Gamma_n=\bigsqcup_{i=1}^{r}\mathrm{SL}_2(\Z)\gamma_i$. Similarly, for each Moonshine group $\Gamma$ and its Hauptmodul $x(\tau)$ we have

\begin{proposition}[Proposition 3.2 of \cite{TDDN}]\label{MEQ}
	Assume $x(\tau)$ has the Fourier expansion of the form $\displaystyle x(\tau)=\frac{1}{q}+O(1)$. For any integer $n \geqslant 2$ with $\gcd(n,N)=1,$ there is a symmetric irreducible polynomial $\Psi_n(X,Y)\in\CC[X,Y]$ of degree $\psi(n)=n\prod_{\substack{q\mid n\\q\  \text{prime}}}(1+\frac{1}{q})$ in $X$ and $Y$ such that
	\begin{equation}\label{Modular equation}
		\Psi\left(x(\tau),Y\right)=\prod\limits_{
			\substack{
				\alpha \delta=n \\
				0 \leq \beta< \delta \\
				\text{gcd}(\alpha,\beta,\delta)=1
		}}
		\left(Y-x\left(\frac{\alpha\tau+\beta}{\delta}\right)\right).
	\end{equation}
That is to say, if $S_k$ is the elementary symmetric function of degree $1\leqslant k\leqslant \psi(n)$ in $\psi(n)$ variables, then $\displaystyle S_k\left(x(n\tau),\cdots,x\left(\frac{\tau+n-1}{n}\right)\right)$ is a polynomial in $x(\tau)$.
\end{proposition}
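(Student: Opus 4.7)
The plan is to mirror the classical proof of the modular equation for the $j$-invariant, adapted to the Moonshine setting. I would begin by fixing the set of coset representatives
\[
\mathcal{C}_n = \left\{\begin{pmatrix}\alpha & \beta \\ 0 & \delta\end{pmatrix} : \alpha\delta = n,\ 0\leq\beta<\delta,\ \gcd(\alpha,\beta,\delta)=1\right\},
\]
check that $|\mathcal{C}_n|=\psi(n)$, and recall that $\mathcal{C}_n$ is a complete set of representatives for the orbits of $\mathrm{SL}_2(\Z)$ acting on the left on the set $M_n$ of primitive integer matrices of determinant $n$. Then I would form the polynomial
\[
\Phi(\tau,Y) = \prod_{\gamma\in\mathcal{C}_n}\bigl(Y - x(\gamma\tau)\bigr)
\]
and aim to identify $\Phi(\tau,Y)$ with $\Psi_n(x(\tau),Y)$ for some $\Psi_n(X,Y)\in\CC[X,Y]$.

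The decisive step is $\Gamma$-invariance of $\Phi$ in $\tau$. For every $\sigma\in\Gamma$ and $\gamma\in\mathcal{C}_n$, the hypothesis $\gcd(n,N)=1$ should allow one to write $\gamma\sigma=\mu\gamma'$ with $\mu\in\mathrm{SL}_2(\Z)$ and $\gamma'\in\mathcal{C}_n$, so right-multiplication by $\sigma$ merely permutes the factors of $\Phi$. The coefficients of $\Phi$ as a polynomial in $Y$ are then $\Gamma$-invariant meromorphic functions on $\mathcal{H}$; because each $\gamma\in\mathcal{C}_n$ fixes $i\infty$, an explicit $q$-expansion confirms that these coefficients are holomorphic on $\mathcal{H}$ with poles only at the $\Gamma$-cusp $i\infty$. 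Since $\CC(X(\Gamma))=\CC(x)$ and $x$ has its unique pole at $i\infty$, each coefficient must be a polynomial in $x(\tau)$, yielding $\Psi_n\in\CC[X,Y]$ of degree $\psi(n)$ in $Y$; a parallel pole-count at $i\infty$ pins the degree in $X$ at $\psi(n)$ as well.

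For symmetry $\Psi_n(X,Y)=\Psi_n(Y,X)$ I would use the bijection on $\mathcal{C}_n$ that sends $\bigl(\begin{smallmatrix}\alpha & \beta \\ 0 & \delta\end{smallmatrix}\bigr)$ to $\bigl(\begin{smallmatrix}\delta & \beta' \\ 0 & \alpha\end{smallmatrix}\bigr)$ with $\beta'\equiv -\beta\pmod{\alpha}$, arising because the relation $\tau'=\gamma\tau$ inverts to a matrix that, after an upper-triangular $\mathrm{SL}_2(\Z)$ translation (under which $x$ is invariant by periodicity of its $q$-expansion), lies back in $\mathcal{C}_n$. Irreducibility of $\Psi_n$ then follows from transitivity of the action of the monoid of determinant-$n$ matrices on the roots $\{x(\gamma\tau):\gamma\in\mathcal{C}_n\}$, inherited from the analogous well-known fact for $j(\tau)$ via the algebraic relation between $x$ and $j$.

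The main obstacle is the permutation claim: one must verify that every $\sigma\in\Gamma=\bigcup_e W_e$, including the Atkin-Lehner involutions $W_e$, preserves the coset decomposition $M_n=\bigsqcup_{\gamma\in\mathcal{C}_n}\mathrm{SL}_2(\Z)\gamma$ under right-multiplication. The coprimality hypothesis $\gcd(n,N)=1$ enters here decisively: without it the determinants $N/e$ appearing in $W_e$ would fail to be coprime to $n$, so the products $\gamma\sigma$ would drop out of the primitive determinant-$n$ stratum and the invariance of $\Phi$ would collapse. Careful bookkeeping for each type of $W_e$, together with the Fricke-style involution $\tau\mapsto -1/(N\tau)$, is where the bulk of the technical work sits.
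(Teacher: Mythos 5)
There is no proof in the paper to compare against: the authors quote this statement verbatim as Proposition~3.2 of the unpublished preprint \cite{TDDN} (the underlying machinery is that of Chen--Yui \cite{CHYU}, whose coset decomposition appears in an excised draft passage of this very paper). So your proposal must stand on its own, and its decisive step is set up with the wrong group. You decompose $M_n=\bigsqcup_{\gamma\in\mathcal{C}_n}\mathrm{SL}_2(\Z)\gamma$ and want $\gamma\sigma=\mu\gamma'$ with $\mu\in\mathrm{SL}_2(\Z)$; but the Hauptmodul $x$ of a Moonshine group is invariant only under $\Gamma$, and $\Gamma\cap\mathrm{SL}_2(\Z)=\Gamma_0(N)$ --- for $N>1$, $\Gamma$ does not contain $\mathrm{SL}_2(\Z)$. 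From $\gamma\sigma=\mu\gamma'$ with $\mu$ merely in $\mathrm{SL}_2(\Z)$ you get $x(\gamma\sigma\tau)=x(\mu\gamma'\tau)$, which is \emph{not} $x(\gamma'\tau)$, so the factors of $\Phi$ are not permuted and $\Gamma$-invariance of the coefficients does not follow; indeed the multiset $\{x(\gamma\tau)\}$ is not even well defined on $\mathrm{SL}_2(\Z)$-cosets. The repair is to replace $M_n$ by the adapted set $\Delta_n^{\ast}(N)$ of primitive integral matrices of determinant $n$ whose lower-left entry is divisible by $N$ and whose upper-left entry is prime to $N$: when $\gcd(n,N)=1$ one has $\Delta_n^{\ast}(N)=\bigsqcup_{\gamma\in\mathcal{C}_n}\Gamma_0(N)\gamma$ (the nontrivial point being that the \emph{same} $\psi(n)$ upper-triangular representatives serve for the much smaller group $\Gamma_0(N)$), this set is stable under right multiplication by $\Gamma_0(N)$, and one then checks separately that the normalized Atkin--Lehner matrices $w_e$ permute the cosets after being pulled through. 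Note that your closing heuristic misfires twice here: elements of $W_e$ have determinant $e$, not $N/e$, and since the normalized $w_e$ are not integral, the phrase ``$\gamma\sigma$ stays in the primitive determinant-$n$ stratum'' does not parse for them --- the $W_e$ bookkeeping is not a refinement of your $\mathrm{SL}_2(\Z)$ argument but a replacement for it.

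Two further steps are glossed in ways that would fail as written. First, these groups generally have more than one cusp class (e.g.\ $\Gamma_0(39)+39$ has two, with $x$ finite at the class of $1/3$), so ``poles only at the $\Gamma$-cusp $i\infty$'' is not read off from a $q$-expansion at $i\infty$ alone: you must show each $\gamma\in\mathcal{C}_n$ carries every non-polar cusp class to a non-polar class, which is another place where $\gcd(n,N)=1$ enters (compute the gcd of the reduced denominator of $\gamma s$ with $N$). Second, irreducibility does not ``inherit from $j$ via the algebraic relation between $x$ and $j$'': the relevant monodromy is the action of $\Gamma$ on $\Gamma_0(N)\backslash\Delta_n^{\ast}(N)$, a different coset space from $\mathrm{SL}_2(\Z)\backslash M_n$, and transitivity there must be proved directly (as Chen--Yui do); nothing formal transports it across the correspondence with $j$, since $\Gamma$ and $\mathrm{SL}_2(\Z)$ are incomparable groups. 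Your symmetry bijection $\beta'\equiv-\beta\ (\mathrm{mod}\ \alpha)$, using periodicity of $x$, is fine, though the standard argument also needs irreducibility (or a gcd argument) to upgrade ``common vanishing locus'' to $\Psi_n(Y,X)=\pm\Psi_n(X,Y)$ and to exclude the sign $-1$.
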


To use Proposition \ref{MEQ}, we require that $x(\tau)$ has the Fourier expansion of the form $\displaystyle \frac{1}{q}+O(1)$. However, the Möbius transformations $x(\tau)=1/t_{\Gamma}$ that we chosen in Section \ref{HaDE} all have the Fourier expansion $x(\tau)=q+O(q^2)$. Fortunately, it turns out that in this cases, we can also find the Modular Equations $\Psi(X,Y)$. For example, we have
\begin{theorem}\label{MFFOR39+39}
	If $\displaystyle\Gamma=39+39,t_{\Gamma}=\frac{\eta_3\eta_{13}}{\eta_1\eta_{39}}=\frac{1}{q}+O(1), x(\tau)=1/t_\Gamma$. Let
	\[\Psi_2(X,Y)=Y^3+(2X-X^2)Y^2+(-X+2X^2)Y+X^3,\]
	then we have
	\begin{equation}\label{MEQ39+39}
		\Psi_2(x(\tau),Y)=\left(Y-x\left(2\tau\right)\right)\left(Y-x\left(\frac{\tau}{2}\right)\right)\left(Y-x\left(\frac{\tau+1}{2}\right)\right).
	\end{equation}
\end{theorem}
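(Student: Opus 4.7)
The approach is to leverage Proposition \ref{MEQ}, which applies only to a Hauptmodul with Fourier expansion $1/q+O(1)$, by passing through the normalized Hauptmodul $t_\Gamma=1/x$ (which does satisfy this hypothesis) and then inverting to obtain a modular equation for $x=1/t_\Gamma$.

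First I would apply Proposition \ref{MEQ} to $t_\Gamma$ with $n=2$; since $\gcd(2,39)=1$, this yields a symmetric irreducible polynomial $\Phi(U,V)\in\CC[U,V]$ of degree $\psi(2)=3$ in each variable such that
\[
\Phi(t_\Gamma(\tau),V)=\prod_{i=1}^{3}\bigl(V-t_\Gamma(\gamma_i\tau)\bigr),
\]
where $\gamma_1\tau=2\tau$, $\gamma_2\tau=\tau/2$, $\gamma_3\tau=(\tau+1)/2$ represent the three cosets. Setting $T_i:=t_\Gamma(\gamma_i\tau)$ and letting $\sigma_k$ denote the elementary symmetric functions, each $\sigma_k(T_1,T_2,T_3)$ is therefore a polynomial in $t_\Gamma(\tau)=1/x(\tau)$.

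Next, using $x(\gamma_i\tau)=1/T_i$, I would expand
\[
\prod_{i=1}^{3}\bigl(Y-x(\gamma_i\tau)\bigr)=Y^{3}-\frac{\sigma_2(T)}{\sigma_3(T)}Y^{2}+\frac{\sigma_1(T)}{\sigma_3(T)}Y-\frac{1}{\sigma_3(T)},
\]
so the three coefficients become rational functions in $x(\tau)$. To identify them with the coefficients of the claimed $\Psi_2(X,Y)=Y^3+(2X-X^2)Y^2+(-X+2X^2)Y+X^3$, I would compare $q$-expansions. Using the Dedekind eta transformation rules, one expands $x(2\tau)$, $x(\tau/2)$, $x((\tau+1)/2)$ to sufficient order; the latter two are Puiseux series in $q^{1/2}$ with opposite leading terms $\pm q^{1/2}$ (since $e^{2\pi i(\tau+1)/2}=-e^{\pi i\tau}$). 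The half-integer powers of $q$ cancel out of each symmetric function, and the resulting integral $q$-series can be matched term by term against $x^2-2x$, $2x^2-x$, $-x^3$.

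Matching finitely many Fourier coefficients suffices for rigor: the coefficients of $\prod_i(Y-x(\gamma_i\tau))$ are meromorphic functions on the compact Riemann surface $\Gamma\setminus\mathcal{H}^{*}$, with poles only at cusps of orders controlled a priori by the $\gamma_i$, hence polynomials in the Hauptmodul $x$ of bounded degree. The main obstacle is the bookkeeping for the fractional Puiseux expansions of $x(\tau/2)$ and $x((\tau+1)/2)$ arising from the eta-quotient $t_\Gamma=\eta_3\eta_{13}/(\eta_1\eta_{39})$, where one must track enough terms to see the half-integer powers cancel and the integral $q$-series match the target polynomials in $x$; this is routine but requires care.
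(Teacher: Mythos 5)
Your proposal is correct and essentially identical to the paper's proof: the paper likewise applies Proposition \ref{MEQ} to $t_\Gamma$ with $n=2$, uses $q$-expansions to find the elementary symmetric functions of $t_\Gamma(2\tau)$, $t_\Gamma(\tau/2)$, $t_\Gamma((\tau+1)/2)$ as polynomials in $t_\Gamma$ (namely $t_\Gamma^2-2t_\Gamma$, $2t_\Gamma^2-t_\Gamma$, $-t_\Gamma^3$), and then inverts via $x=1/t_\Gamma$ exactly as you describe. Your closing observation that matching finitely many Fourier coefficients is rigorous because the coefficients are polynomials of bounded degree in the Hauptmodul is a point the paper leaves implicit, but otherwise the routes coincide.
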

\begin{proof}
	Let $\displaystyle t_1=t_\Gamma(2\tau),t_2=t_\Gamma\left(\frac{\tau}{2}\right),t_3=t_\Gamma\left(\frac{\tau+1}{2}\right)$. Proposition \ref{MEQ} says that, $t_1+t_2+t_3,t_1t_2+t_1t_3+t_2t_3$ and $t_1t_2t_3$ are all polynomial of degree small than $\psi(2)=3$ in $t_\Gamma$. By using the $q$-expansion, we find that
	\[t_1+t_2+t_3=t_\Gamma^2-2t_\Gamma,\ \ t_1t_2+t_1t_3+t_2t_3=2t_\Gamma^2-t_\Gamma,\ \ t_1t_2t_3=-t_\Gamma^3.\]
	Since $x(\tau)=1/t_\Gamma(\tau),$ we have
	\begin{align*}
		x\left(2\tau\right)+x\left(\frac{\tau}{2}\right)+x\left(\frac{\tau+1}{2}\right)&=\frac{t_1t_2+t_1t_3+t_2t_3}{t_1t_2t_3}\\
		&=\frac{2t_\Gamma^2-t_\Gamma}{-t_\Gamma^3}\\ 
		&=-2x+x^2.
	\end{align*}
	Similarly, 
	\[x\left(2\tau\right)x\left(\frac{\tau}{2}\right)+x\left(2\tau\right)x\left(\frac{\tau+1}{2}\right)+x\left(\frac{\tau}{2}\right)x\left(\frac{\tau+1}{2}\right)=-x+2x^2,\]
	\[x\left(2\tau\right)x\left(\frac{\tau}{2}\right)x\left(\frac{\tau+1}{2}\right)=-x^3.\]
	Thus, we have (\ref{MEQ39+39}).
\end{proof}

For each $\Gamma$ and $x(\tau)=1/t_\Gamma$, we choose $n=2$ or $3$ and list their modular equations $\Psi_n(X,Y)$ in Table \ref{MEQOH}.

\section{Series for $1/\pi$}


Suppose that $\Phi_n(X,Y)$ is a modular equation for some $\Gamma$. If $\tau_0\in \mathbb{H}$ such that
\[\frac{a\tau_0+b}{c\tau_0+d}=\frac{\alpha\tau_0+\beta}{\delta}\]
for some $a\delta=n, 0\leqslant \beta<\delta$ and $\displaystyle\begin{pmatrix}
	a & b \\
	c & d
\end{pmatrix}\in\Gamma$, then by (\ref{Modular equation}), we know that
\begin{align*}
	\Psi_n\left(x\left(\tau_0\right),x(\tau_0)\right)&=\Psi_n\left(x(\tau_0),x\left(\frac{a\tau_0+b}{c\tau_0+d}\right)\right)\\
	&=\Psi_n\left(x(\tau_0),x\left(\frac{\alpha\tau_0+\beta}{\delta}\right)\right)\\ 
	&=0.
\end{align*}
Thus $x(\tau_0)$ is a root of $\Psi_n(X,X)=0$.

We now derive Ramanujan-Sato series for $1/\pi$, the following argument follows \cite{TDDN}. If
\[M=\begin{pmatrix}
	a' & b' \\
	c' & d'
\end{pmatrix}:=\begin{pmatrix}
	a & b \\
	c & d
\end{pmatrix}^{-1}\begin{pmatrix}
\alpha & \beta \\
0 & \delta
\end{pmatrix}\]
has $c'\neq0$ (this is equivalent to $c\neq0$), then
$M\tau_0=\tau_0$ and $y(\tau):=x\left( M\tau \right)$ satisfies
\begin{equation}\label{PSIXY}
	\Psi_n(x(\tau),y(\tau))=0
\end{equation}
by (\ref{Modular equation}). We can apply (\ref{PSIXY}) to expand $y$ about $x(\tau_0)$:
\begin{equation}\label{yexpandaboutx}
	y(\tau)=\sum_{k=0}^\infty y^{(k)}(x(\tau_0))\frac{\left(x(\tau)-x(\tau_0)\right)^k}{k!},
\end{equation}
where $\displaystyle y^{(k)}=\frac{\mathrm{d}^ky}{\mathrm{d}x^k}$. Now applying $\displaystyle \frac{1}{2\pi\mathrm{i}}\frac{\mathrm{d}}{\mathrm{d}\tau}$ to both side of (\ref{yexpandaboutx}), we have
\begin{equation*}
	\frac{1}{2\pi\mathrm{i}}\frac{a'd'-b'c'}{(c'\tau+d')^2}\cdot\frac{\mathrm{d}x}{\mathrm{d}\tau}(M\tau)=\frac{1}{2\pi\mathrm{i}}\left(y^{(1)}(x(\tau_0))\frac{\mathrm{d}x}{\mathrm{d}\tau}(\tau)+y^{(2)}(x(\tau_0))(x(\tau)-x(\tau_0))\frac{\mathrm{d}x}{\mathrm{d}\tau}(\tau)+\cdots\right).
\end{equation*}
Let $\displaystyle z=\frac{(\log(x))_q}{\sqrt{w(x)}}$ as in Theorem \ref{MDFz}. Then 
\begin{equation}\label{xzwtau}
	\frac{1}{2\pi\mathrm{i}}\frac{\mathrm{d}x}{\mathrm{d}\tau}(\tau)=x(\tau)z(\tau)\sqrt{w(\tau)}.
\end{equation}
Denote $\sqrt{w(\tau)}$ by $W(\tau)$, thus
\begin{equation}\label{diff1}
\frac{a'd'-b'c'}{(c'\tau+d')^2}	x(M\tau)z(M\tau)W(M\tau)=\left(y^{(1)}(x(\tau_0))+y^{(2)}(x(\tau_0))(x-x(\tau_0))+\cdots\right)xzW.
\end{equation}
Applying $\displaystyle \frac{1}{2\pi\mathrm{i}}\frac{\mathrm{d}}{\mathrm{d}\tau}$ again to both side of (\ref{diff1}) and use (\ref{xzwtau}), we have
\begin{equation}
	\label{diff2}
	\begin{split}
		\frac{\mathrm{i}c'}{\pi}\frac{a'd'-b'c'}{(c'\tau+d')^2}&x(M\tau)z(M\tau)W(M\tau)+\frac{(a'd'-b'c')^2}{(c'\tau+d')^4}\Big(z(M\tau)W(M\tau)+\\
		x(M\tau)\frac{\mathrm{d}z}{\mathrm{d}x}&(M\tau)W(M\tau)+x(M\tau)z(M\tau)\frac{\mathrm{d}W}{\mathrm{d}x}(M\tau)\Big)x(\tau)z(M\tau)W(M\tau)\\
		=&\left(y^{(2)}(x(\tau_0))+\cdots\right)x^2z^2W^2+\big(y^{(1)}(x(\tau_0))+y^{(2)}(x(\tau_0))(x-x(\tau_0))\\
		&+\cdots\big)\left(z(\tau)w(\tau)+x(\tau)\frac{\mathrm{d}z}{\mathrm{d}x}(\tau)W(\tau)+x(\tau)z(\tau)\frac{\mathrm{d}W}{\mathrm{d}x}(\tau)\right)xzW.
	\end{split}
\end{equation}
Let $\tau=\tau_0$ in (\ref{diff2}) and make some simplification, we have
\[\frac{1}{\pi}=\left.\frac{W(1-y^{(1)})(c'\tau_0+d')}{\mathrm{i}c'}\left(x\frac{\mathrm{d}z}{\mathrm{d}x}+\left(1+\frac{x\frac{\mathrm{d}W}{\mathrm{d}x}}{W}+\frac{xy^{(2)}}{y^{(1)}(1-y^{(1)})}\right)z\right)\right|_{\tau=\tau_0}.\]
Since $\displaystyle z=\sum\limits_{n=0}^\infty A_nx^n$, we get
\[\frac{1}{\pi}=\sum_{n=0}^{\infty}A_n(Bn+C)x(\tau_0)^n,\]
where
\begin{equation}\label{BandC}
	\begin{split}
		&B=\frac{W(x(\tau_0))(1-y^{(1)}(x(\tau_0))(c'\tau_0+d'))}{\mathrm{i}c'},\\
		&C=B\left(1+\frac{x(\tau_0)\frac{\mathrm{d}W}{\mathrm{d}x}(x(\tau_0))}{W(x(\tau_0))}+\frac{x(\tau_0)y^{(2)}(x(\tau_0))}{y^{(1)}(x(\tau_0))(1-y^{(1)}(x(\tau_0)))}\right).
	\end{split}
\end{equation}

Let's continue with the example $\Gamma=39+39$ to illustrate how the above calculation works.

\begin{example}
	Let $n=2$, we know from Theorem \ref{MFFOR39+39} that the modular equation for $\displaystyle x=\frac{\eta_1\eta_{39}}{\eta_3\eta_{13}}$ is
	\begin{equation}\label{meqfor39+39}
		\Psi_2(X,Y)=Y^3+(2X-X^2)Y^2+(-X+2X^2)Y+X^3.
	\end{equation}
	Thus, the roots of $\Psi(X,X)=0$ are $0,3\pm2\sqrt{2}$. Let $\displaystyle \tau_0=\mathrm{i}\sqrt{\frac{2}{39}}$, notice that $\tau_0$ satisfies
	\[\begin{pmatrix}
		0 & -1 \\
		39 & 0
	\end{pmatrix}\tau_0=\begin{pmatrix}
	1 & 0 \\
	0 & 2
\end{pmatrix}\tau_0,\]
where $\begin{pmatrix}
	0 & -1 \\
	39 & 0
\end{pmatrix}\in\Gamma$. Choose $M=\begin{pmatrix}
0 & -1 \\
39 & 0
\end{pmatrix}^{-1}\begin{pmatrix}
1 & 0 \\
0 & 2
\end{pmatrix}=\begin{pmatrix}
0 & 2/39 \\
-1 & 0
\end{pmatrix}$, we have $c'=-1,d'=0$, set $y(\tau)=x(M\tau)$. By numerical approximation, we have $x(\tau_0)=3-2\sqrt{2}$. Since $\Psi_2(x(\tau),y(\tau))=0$, we can calculate from (\ref{meqfor39+39}) that
\[y^{(1)}(3-2\sqrt{2})=-1,\ \ y^{(2)}(3-2\sqrt{2})=-16-\frac{25}{\sqrt{2}}.\]
We can also calculate from (\ref{wfor39+39}) that
\[W(3-2\sqrt{2})=-6(-2+\sqrt{2})\sqrt{7501-5304\sqrt{2}},\]
\[\frac{\mathrm{d}W}{\mathrm{d}x}(3-2\sqrt{2})=-3(-5753+4068\sqrt{2})\sqrt{7501+5304\sqrt{2}}.\]
Finally, via (\ref{BandC}), we get $\displaystyle \frac{1}{\pi}=\sum_{n=0}^{\infty}A_n(Bn+C)\big(3-2\sqrt{2}\big)^n$ with
\[B=-4(-2+\sqrt{2})\sqrt{6(577-408 \sqrt{2})},\]
\[C=44 \sqrt{3(577-408 \sqrt{2})}-22 \sqrt{6(577-408 \sqrt{2})}+2(-47420+33531\sqrt{2})\sqrt{3(577+408\sqrt{2})},\]
and $A_n$ as in Example \ref{Anfor39+39}.
\end{example}

In Table \ref{tau0Mx} we list $\tau_0, M, x(\tau_0)$ that used to derive the series. Also in Table \ref{BCFORSERIES}, we list $B,C$ for each series.

\section{Tables}\label{Tables}
In this section, we list the tables of modular equations of Huaptmodul $t_{\Gamma}$, the $a,b,c$ for $1/\pi$ series $\frac{1} {\pi}=\sum _{n=0}^{\infty }A_{ n}\left(an+b \right)c^{n}$, and recursion formula of $A_n$ for some given level $N$.

The details of $R(x)$ and $w(x)$ defining the differential equation come from \cite{TDDN}. The notation $\eta_{\alpha}$ means
$$\eta_a=q^{a/24}\prod\limits_{n=1}^{\infty}(1-q^{an}).$$
Moreover, we write $N+e$ for $\Gamma_{0}(N)+e$.

\begin{table}[htbp]
	\caption{Huaptmoduls, $x(\tau), w(x)$ and $R(x)$ for 10 moonshine groups. \label{HXWR}}
	\begin{tabular}{|c|c|c|c|}
		\hline
		\multirow{2}*{\vspace{-0.2cm}$\Gamma$} & \multirow{2}*{\vspace{-0.2cm}$t_{\Gamma}$} & \multirow{2}*{\vspace{-0.2cm}$x$}  & \gape{$w(x)$}\\
		\cline{4-4}
		& & & \gape{$R(x)$} \\
		\hline
		\multirow{2}*{$14+7$} & \multirow{2}*{$\big(\frac{\eta_1\eta_7}{\eta_2\eta_{14}}\big)^3 $} & \multirow{20}*{\vspace{-2.5cm}$1/t_{\Gamma}$}  & \gape{$(1+x)(1+8x)(1+5x+8x^2)$}\\
		\cline{4-4}
		& & & \gape{$-8x(1+4x)(1+7x+8x^2)$} \\
		\cline{1-2}\cline{4-4}
		\multirow{2}*{$14+14$} & \multirow{2}*{$\big(\frac{\eta_2\eta_7}{\eta_1\eta_{14}}\big)^4$} &  & \gape{$1-14x+19x^2-14x^3+x^4$}\\
		\cline{4-4}
		& & & \gape{$x(6-25x+34x^2-4x^3)$} \\
		\cline{1-2}\cline{4-4}
		\multirow{2}*{$15+15$} & \multirow{2}*{$\big(\frac{\eta_1\eta_5}{\eta_3\eta_{15}}\big)^3$} &  & \gape{$(-1-x+x^2)(-1+11x+x^2)$}\\
		\cline{4-4}
		& & & \gape{$4x(1+4x-6x^2-x^3)$} \\
		\cline{1-2}\cline{4-4}
		\multirow{2}*{$16+$} & \multirow{2}*{$\frac{(\eta_2\eta_8)^6}{(\eta_1\eta_4\eta_{16})^4} $} &  & \gape{$(1-2x)^2(1-12x+4x^2)$}\\
		\cline{4-4}
		& & & \gape{$8x(1-2x)(1-8x+4x^2)$} \\
		\cline{1-2}\cline{4-4}
		\multirow{2}*{$20+20$} & \multirow{2}*{$\big(\frac{\eta_4\eta_5}{\eta_1\eta_{20}}\big)^2$} &  & \gape{$(1+x)^2(1-8x-2x^2-8x^3+x^4)$}\\
		\cline{4-4}
		& & & \gape{$x(1+x)(2+25x+31x^2+47x^3-9x^4)$} \\
		\cline{1-2}\cline{4-4}
		\multirow{2}*{$21+21$} & \multirow{2}*{$\big(\frac{\eta_3\eta_7}{\eta_1\eta_{21}}\big)^2$} &   & \gape{$(1-x)^2(1-6x-17x^2-6x^3+x^4)$}\\
		\cline{4-4}
		& & & \gape{$4x+4x^2-70x^3+16x^4+52x^5-9x^6$} \\
		\cline{1-2}\cline{4-4}
		\multirow{2}*{$22+11$} & \multirow{2}*{$\big(\frac{\eta_1\eta_{11}}{\eta_2\eta_{22}}\big)^2$} &  & \gape{$(1+4x+8x^2+4x^3)(1+8x+16x^2+16x^3)$}\\
		\cline{4-4}
		& & & \gape{$-8x(1+12x+57x^2+132x^3+160x^4+72x^5)$} \\
		\cline{1-2}\cline{4-4}
		\multirow{2}*{$26+26$} & \multirow{2}*{$\big(\frac{\eta_2\eta_{13}}{\eta_1\eta_{26}}\big)^2$} &   & \gape{$(1-x)(1-8x+8x^2-18x^3+8x^4-8x^5+x^6)$}\\
		\cline{4-4}
		& & & \gape{\hspace{2.5mm}$(x/4)(20-109x+339x^2-521x^3+445x^4-335x^5+49x^6)$\hspace{0.5mm}\  } \\
		\cline{1-2}\cline{4-4}
		\multirow{2}*{$35+35$} & \multirow{2}*{$\frac{\eta_5\eta_7}{\eta_1\eta_{35}}$} &  & \gape{$(1+x-x^2)(1-5x-9x^3-5x^5-x^6)$}\\
		\cline{4-4}
		& & & \gape{$-x(-2-9x-14x^2-47x^3+30x^4-57x^5+50x^6+16x^7)$} \\
		\cline{1-2}\cline{4-4}
		\multirow{2}*{\vspace{-0.7cm}$39+39$} & \multirow{2}*{\vspace{-0.7cm}$\frac{\eta_3\eta_{13}}{\eta_1\eta_{39}}$} &  & \gape{$(1+x)^2(1-7x+11x^2-7x^3+x^4)(1+x-x^2+x^3+x^4)$}\\
		\cline{4-4}
		& & &
		\gape{\tabincell{c}{$x(2+17x-48x^2-25x^3+194x^4-45x^5-168x^6+137x^7$\\$+82x^8-25x^9)$}}  \\
		\hline
	\end{tabular}
\end{table}

\begin{table}[htbp]
	\caption{The recurrence relations and initial values of $A_n$.\label{RR}}
	\centering
	\begin{tabular}{|c|c|}
		\hline
		\gape{$\Gamma$} & \gape{$A_n$} \\
		\hline
		14+7&\gape{\tabincell{c}{
				$\left(-1024+1536 n-768 n^{2}+128 n^{3}\right) A_{n-4}+(-864+1584 n-1008 n^{2}$ \\
				$+224 n^{3}) A_{n-3}+\left(-176+420 n-366 n^{2}+122 n^{3}\right) A_{n-2}+(-8+30n $\\
				$-42 n^{2}+28 n^{3}) A_{n-1}+2n^{3} A_{n}=0,$\\
				 $A_{0}=1, A_{1}=-4, A_{2}=16, A_{3}=-72.$}} \\
		\hline
		14+14& \gape{\tabincell{c}{
				$\left(-16+24 n-12 n^{2}+2 n^{3}\right) A_{n-4}+\left(102-194 n+126 n^{2}-28 n^{3}\right) A_{n-3}+(-50$\hspace{1mm}\\
				$+126 n-114 n^{2}+38 n^{3}) A_{n-2}+\left(6-26 n+42 n^{2}-28 n^{3}\right) A_{n-1}+2n^{3}A_{n}=0,$\\
				$A_{0}=1, A_{1}=3, A_{2}=16, A_{3}=117.$}}\\
		\hline
		15+15& \gape{\tabincell{c}{$\left(-16+24 n-12 n^{2}+2 n^{3}\right) A_{n-4}+\left(-72+138 n-90 n^{2}+20 n^{3}\right) A_{n-3}$ \\
				$+\left(32-84 n+78 n^{2}-26 n^{3}\right) A_{n-2}+\left(4-18 n+30 n^{2}-20 n^{3}\right) A_{n-1}$\\
				$+2 n^{3} A_{n}=0,$\\
				$A_{0}=1, A_{1}=2, A_{2}=11, A_{3}=72.$}}\\
		\hline
		16+& \gape{\tabincell{c}{
				$\left(-256+384 n-192 n^{2}+32 n^{3}\right) A_{n-4}+\left(480-896 n+576 n^{2}-128 n^{3}\right) A_{n-3}$ \\
				$+\left(-160+384 n-336 n^{2}+112 n^{3}\right) A_{n-2}+\left(8-32 n+48 n^{2}-32 n^{3}\right) A_{n-1}$\\
				$+2 n^{3} A_{n}=0,$\\
				$A_{0}=1, A_{1}=4, A_{2}=20, A_{3}=128.$}}\\
		\hline
		20+20& \gape{\tabincell{c}{
				$\left(-54+54 n-18 n^{2}+2 n^{3}\right) A_{n-6}+\left(190-226 n+90 n^{2}-12 n^{3}\right) A_{n-5}+(312$\\
				$-428 n+204 n^{2}-34 n^{3}) A_{n-4}+\left(168-292 n+180 n^{2}-40 n^{3}\right) A_{n-3}+(54$ \\
				$-122 n+102 n^{2}-34 n^{3}) A_{n-2}+\left(2-10 n+18 n^{2}-12 n^{3}\right) A_{n-1}+2 n^{3} A_{n}=0,$\\
				$A_{0}=1, A_{1}=1, A_{2}=6, A_{3}=30, A_{4}=175, A_{5}=1087.$}}\\
		\hline
		21+21& \gape{\tabincell{c}{
				$\left(-54+54 n-18 n^{2}+2 n^{3}\right) A_{n-6}+\left(260-304 n+120 n^{2}-16 n^{3}\right) A_{n-5} $\\
				$+\left(64-96 n+48 n^{2}-8 n^{3}\right) A_{n-4}+\left(-210+338 n-198 n^{2}+44 n^{3}\right) A_{n-3}$ \\
				\hspace{1.5mm}$+\left(8-24 n+24 n^{2}-8 n^{3}\right) A_{n-2}+\left(4-16 n+24 n^{2}-16 n^{3}\right) A_{n-1}+2 n^{3} A_{n}=0,$\hspace{1mm}\  \\
				$A_{0}=1, A_{1}=2, A_{2}=8, A_{3}=37, A_{4}=204, A_{5}=1218.$}}\\
		\hline
		22+11 & \gape{\tabincell{c}{
				$\left(-3456+3456n-1152 n^{2}+128 n^{3}\right) A_{n-6}+(-6400+7360 n-2880 n^{2}$\\
				$+384 n^{3}) A_{n-5}+\left(-4224+5696 n-2688 n^{2}+448 n^{3}\right) A_{n-4}+(-1368$ \\
				$+2244 n-1332 n^{2}+296 n^{3}) A_{n-3}+\left(-192+416 n-336 n^{2}+112 n^{3}\right) A_{n-2}$ \\
				$+(-8+28 n-36 n^{2}+24 n^{3}) A_{n-1}+2 n^{3} A_{n}=0,$\\
				$A_{0}=1, A_{1}=-4, A_{2}=12, A_{3}=-36, A_{4}=124, A_5=-496.$}}\\
		\hline
		26+26&\gape{\tabincell{c}{
				$\left(\frac{343}{4}-\frac{147 n}{2}+21 n^{2}-2 n^{3}\right) A_{n-7}+\left(-\frac{1005}{2}+\frac{983 n}{2}-162 n^{2}+18 n^{3}\right) A_{n-6}$\\
				$+\left(\frac{2225}{4}-\frac{1245 n}{2}+240 n^{2}-32 n^{3}\right) A_{n-5}
				+\left(-521+\frac{1353 n}{2}-312 n^{2}+52 n^{3}\right) A_{n-4}$ \\
				$+\left(\frac{1017}{4}-\frac{807 n}{2}+234 n^{2}-52 n^{3}\right) A_{n-3}+\left(-\frac{109}{2}+\frac{237 n}{2}-96 n^{2}+32 n^{3}\right) A_{n-2}$ \\
				$+\left(5-19 n+27 n^{2}-18 n^{3}\right) A_{n-1}+2 n^{3} A_{n}=0,$\\
				$A_{0}=1, A_{1}=\frac{5}{2}, A_{2}=\frac{59}{8}, A_{3}=\frac{497}{16},A_{4}=\frac{19539}{128}, A_{5}=\frac{207051}{256}, A_{6}=\frac{4623151}{1024}.$}}\\
		\hline
		35+35&\gape{\tabincell{c}{
				$\left(-128+96 n-24 n^{2}+2 n^{3}\right) A_{n-8}+\left(-350+296 n-84 n^{2}+8 n^{3}\right) A_{n-7}$ \\
				$+\left(342-330 n+108 n^{2}-12 n^{3}\right) A_{n-6}+\left(-150+160 n-60 n^{2}+8 n^{3}\right) A_{n-5}$ \\
				$+\left(188-238 n+108 n^{2}-18 n^{3}\right) A_{n-4}+\left(42-64 n+36 n^{2}-8 n^{3}\right) A_{n-3}$ \\
				$+\left(18-42 n+36 n^{2}-12 n^{3}\right) A_{n-2}+\left(2-8 n+12 n^{2}-8 n^{3}\right) A_{n-1}+2 n^{3} A_{n}=0,$\\
				$A_{0}=1, A_{1}=1, A_{2}=3, A_{3}=10, A_{4}=38, A_{5}=150, A_{6}=627, A_{7}=2703. $}}\\
		\hline
	\end{tabular}
\end{table}

\begin{table}[htbp]
	\centering
	\begin{tabular}{|c|c|}
		\hline
		39+39& \gape{\tabincell{c}{$\left(-250+150 n-30 n^{2}+2 n^{3}\right) A_{n-10}+\left(738-488 n+108 n^{2}-8n^{3}\right) A_{n-9} $\\
				\hspace{1mm}$+\left(1096-786 n+192 n^{2}-16 n^{3}\right) A_{n-8}+\left(-1176+924 n-252 n^{2}+24 n^{3}\right) A_{n-7}$\hspace{0.8mm} \\
				$+\left(-270+234 n-72 n^{2}+8 n^{3}\right) A_{n-6}+\left(970-938 n+330 n^{2}-44 n^{3}\right) A_{n-5}$ \\
				$+\left(-100+114 n-48 n^{2}+8 n^{3}\right) A_{n-4}+\left(-144+204 n-108 n^{2}+24 n^{3}\right) A_{n-3}$ \\
				$+\left(34-66 n+48 n^{2}-16 n^{3}\right) A_{n-2}+\left(2-8 n+12 n^{2}-8 n^{3}\right) A_{n-1}+2 n^{3} A_{n}=0,$\\
				$A_{0}=1, A_{1}=1, A_{2}=4, A_{3}=10, A_{4}=38, A_{5}=140, A_{6}=563, A_{7}=2315, $\\
				$A_{8}=9816, A_{9}=42432.$}}\\
		\hline
	\end{tabular}
\end{table}

\begin{table}[htbp]
	\caption{The modular equation of Huaptmodul.\label{MEQOH}}
	\begin{tabular}{|c|c|}
		\hline
		\gape{$\Gamma$}  & \gape{$\psi_n(X,Y),n$}\\
		\hline
		14+7&\gape{\tabincell{c}{\hspace{1mm}$Y^{4}+\left(-18 X-72 X^{2}-64 X^{3}\right)Y^{3}+\left(-9 X-54 X^{2}-72 X^{3}\right)Y^{2}+(-X-9 X^{2}$\\$-18 X^{3}) Y+X^{4}$, $n=3$}}\\
		\hline
		14+14&\gape{\tabincell{c}{$Y^{4}+\left(-18 X+12 X^{2}-X^{3}\right)Y^{3}+\left(12 X+9 X^{2}+12 X^{3}\right)Y^{2}+(-X+12 X^{2}$\\$-18 X^{3}) Y+X^{4},n=3$}}\\
		\hline
		15+15& \gape{$Y^{3}+\left(6 X+X^{2}\right)Y^{2}+\left(-X+6 X^{2}\right) Y+X^{3},n=2$}\\
		\hline
		16+& \gape{\tabincell{c}{$Y^{4}+\left(-24 X+48 X^{2}-16 X^{3}\right)Y^{3}+\left(12 X-42 X^{2}+48 X^{3}\right)Y^{2}+(-X+12 X^{2}$\\$-24 X^{3}) Y+X^{4},n=3$}}\\
		\hline
		20+20& \gape{\tabincell{c}{$Y^{4}+\left(3 X+6 X^{2}-X^{3}\right)Y^{3}+\left(6 X+18 X^{2}+6 X^{3}\right)Y^{2}+(-X+6 X^{2}+3 X^{3}) Y$\\$+X^{4},n=3$}}\\
		\hline
		21+21& \gape{$Y^{3}+\left(4 X-X^{2}\right)Y^{2}+\left(-X+4 X^{2}\right) Y+X^{3},n=2$}\\
		\hline
		22+11& \gape{\tabincell{c}{$Y^{4}+\left(-9 X-24 X^{2}-16 X^{3}\right)Y^{3}+\left(-6 X-24 X^{2}-24 X^{3}\right)Y^{2}+(-X-6 X^{2}$\\$-9 X^{3}) Y+X^{4},n=3$}}\\
		\hline
		26+26& \gape{\tabincell{c}{$Y^{4}+\left(-3 X+6 X^{2}-X^{3}\right)Y^{3}+\left(6 X-9 X^{2}+6 X^{3}\right)Y^{2}+\left(-X+6 X^{2}-3 X^{3}\right)Y\hspace{-0mm}$\\$ +X^{4},n=3$}}\\
		\hline
		35+35& \gape{$Y^{3}+\left(2 X+X^{2}\right)Y^{2}+\left(-X+2 X^{2}\right) Y+X^{3},n=2$}\\
		\hline
		39+39& \gape{$Y^{3}+\left(2 X-X^{2}\right)Y^{2}+\left(-X+2 X^{2}\right) Y+X^{3},n=2$}\\
		\hline
	\end{tabular}
\end{table}

\begin{table}[htbp]
	\caption{$\tau_0, M$ and $x(\tau_0)$ that used to derive the series for $1/\pi$.\label{tau0Mx}}
	\begin{tabular}{|c|c|c|c|c|}
		\hline
		\gape{$\Gamma$} & \gape{$\tau_0$} & \gape{$\begin{pmatrix}
				a & b \\
				c & d
			\end{pmatrix}\tau_0=\begin{pmatrix}
				\alpha & \beta \\
				0 & \delta
			\end{pmatrix}\tau_0$} &\gape{$M$} & \gape{$x(\tau_0)$} \\
		\hline
		14+7& \gape{$\displaystyle \frac{-7+\mathrm{i}\sqrt{21}}{14}$} &\gape{$\begin{pmatrix}
				-7 & -3 \\
				-14 & -7
			\end{pmatrix}\tau_0=\begin{pmatrix}
				1 & 2 \\
				0 & 3
			\end{pmatrix}\tau_0$} &\gape{$\begin{pmatrix}
			1 & -5/7 \\
			2 & 1
		\end{pmatrix}$} &\gape{$\displaystyle \frac{1}{4}(-3+\sqrt{7})$} \\
		\hline
		14+14& \gape{$\displaystyle \mathrm{i}\sqrt{\frac{3}{14}}$} &\gape{$\begin{pmatrix}
				0 & 1 \\
				-14 & 0
			\end{pmatrix}\tau_0=\begin{pmatrix}
				1 & 0 \\
				0 & 3
			\end{pmatrix}\tau_0$} &\gape{$\begin{pmatrix}
			0 & -3/14 \\
			1 & 0
		\end{pmatrix}$} &\gape{$\displaystyle \frac{2}{23+5\sqrt{21}}$}\\
		\hline
		15+15& \gape{$\displaystyle \mathrm{i}\sqrt{\frac{2}{15}}$} &\gape{$\begin{pmatrix}
				0 & -1 \\
				15 & 0
			\end{pmatrix}\tau_0=\begin{pmatrix}
				1 & 0 \\
				0 & 2
			\end{pmatrix}\tau_0$} &\gape{$\begin{pmatrix}
			0 & 2/15 \\
			-1 & 0
		\end{pmatrix}$}  & \gape{$\displaystyle -7+5\sqrt{2}$}\\
		\hline
		16+& \gape{$\displaystyle \frac{8+\mathrm{i}\sqrt{3}}{4}$} &\gape{$\begin{pmatrix}
				-16 & 33 \\
				-16 & 32
			\end{pmatrix}\tau_0=\begin{pmatrix}
				1 & 1 \\
				0 & 3
			\end{pmatrix}\tau_0$} &\gape{$\begin{pmatrix}
			2 & -67/16 \\
			1 & -2
		\end{pmatrix}$}  &\gape{$\displaystyle \frac{1}{2}(5-2\sqrt{6})$}\\
		\hline
		20+20& \gape{$\displaystyle \frac{1}{2}\mathrm{i}\sqrt{\frac{3}{5}}$} &\gape{$\begin{pmatrix}
				0 & -1 \\
				20 & 0
			\end{pmatrix}\tau_0=\begin{pmatrix}
				1 & 0 \\
				0 & 3
			\end{pmatrix}\tau_0$} &\gape{$\begin{pmatrix}
			0 & 3/20 \\
			-1 & 0
		\end{pmatrix}$}  &\gape{$\displaystyle 7-4\sqrt{3}$}\\
		\hline
		21+21& \gape{$\displaystyle \mathrm{i}\sqrt{\frac{2}{21}}$} &\gape{$\begin{pmatrix}
				0 & 1 \\
				-21 & 0
			\end{pmatrix}\tau_0=\begin{pmatrix}
				1 & 0 \\
				0 & 2
			\end{pmatrix}\tau_0$} &\gape{$\begin{pmatrix}
			0 & -2/21 \\
			1 & 0
		\end{pmatrix}$}  &\gape{$\displaystyle 5-2\sqrt{6}$}\\
		\hline
		22+11 & \gape{\hspace{0.5mm}$\displaystyle \frac{-33+\mathrm{i}\sqrt{33}}{22}$\hspace{0.5mm}\ }&\gape{\hspace{0.5mm}$\begin{pmatrix}
				-11 & -17 \\
				22 & 33
			\end{pmatrix}\tau_0=\begin{pmatrix}
				1 & 0 \\
				0 & 3
			\end{pmatrix}\tau_0\hspace{0.5mm}$} &\gape{$\begin{pmatrix}
			3 & 51/11 \\
			-2 & -3
		\end{pmatrix}$}  &\gape{$\displaystyle \frac{1}{2}(-2+\sqrt{3})$}\\
		\hline
		26+26& \gape{$\displaystyle \mathrm{i}\sqrt{\frac{3}{26}}$} &\gape{$\begin{pmatrix}
				0 & -1 \\
				26 & 0
			\end{pmatrix}\tau_0=\begin{pmatrix}
				1 & 0 \\
				0 & 3
			\end{pmatrix}\tau_0$} &\gape{$\begin{pmatrix}
			0 & 3/26 \\
			-1 & 0
		\end{pmatrix}$}  &\gape{$\displaystyle \hspace{0.2mm}\frac{1}{2}(11-3\sqrt{13})$\hspace{0.2mm}}\\
		\hline
		35+35& \gape{$\displaystyle \mathrm{i}\sqrt{\frac{2}{35}}$} &\gape{$\begin{pmatrix}
				0 & -1 \\
				35 & 0
			\end{pmatrix}\tau_0=\begin{pmatrix}
				1 & 0 \\
				0 & 2
			\end{pmatrix}\tau_0$} &\gape{$\begin{pmatrix}
			0 & 2/35 \\
			-1 & 0
		\end{pmatrix}$}  &\gape{$\displaystyle \sqrt{10}-3$}\\
		\hline
		39+39& \gape{$\displaystyle \mathrm{i}\sqrt{\frac{2}{39}}$} &\gape{$\begin{pmatrix}
				0 & -1 \\
				39 & 0
			\end{pmatrix}\tau_0=\begin{pmatrix}
				1 & 0 \\
				0 & 2
			\end{pmatrix}\tau_0$} & \gape{$\begin{pmatrix}
			0& 2/39 \\
			-1 & 0
		\end{pmatrix}$} &\gape{$3-2\sqrt{2}$}\\
		\hline
	\end{tabular}
\end{table}

\begin{table}[htbp]
	\caption{The coefficients $B,C$ of $1/\pi$ series.\label{BCFORSERIES}}
	\begin{tabular}{|c|c|c|}
		\hline
		\gape{$\Gamma$} & \gape{$B$} & \gape{$C$}\\
		\hline
		14+7& \gape{$\displaystyle \frac{3}{2}\sqrt{4-\frac{3 \sqrt{7}}{2}}$} &\gape{$\displaystyle \frac{1}{14}\left(588-223 \sqrt{7}+13 \sqrt{889-336 \sqrt{7}}\right) \sqrt{4+\frac{3 \sqrt{7}}{2}}$} \\
		\hline
		14+14& \gape{$\displaystyle 8 \sqrt{\frac{6}{527+115 \sqrt{21}}}$} &\gape{$\displaystyle \frac{4(747+163 \sqrt{21}) \sqrt{\frac{2}{3}(527-115 \sqrt{21})}}{(23+5 \sqrt{21})^{2}}$}\\
		\hline
		15+15& \gape{$\displaystyle 2 \sqrt{6(99-70 \sqrt{2})}$}& \gape{$\displaystyle 6 \sqrt{3(99-70 \sqrt{2})}+2(-536+379 \sqrt{2})\sqrt{3(99+70 \sqrt{2})}$}\\
		\hline
		16+& \gape{$\displaystyle 2(-2+\sqrt{6})\sqrt{15-6 \sqrt{6}}$} &\gape{$\displaystyle 2 (-12+5 \sqrt{6})\sqrt{\frac{1}{3}(5-2 \sqrt{6})}$}\\
		\hline
		20+20& \gape{\tabincell{c}{$\displaystyle -16(-2+\sqrt{3})$\\$\displaystyle \ \cdot\ \sqrt{3(97-56 \sqrt{3})}$}}&\gape{\tabincell{c}{$\displaystyle 4\left(14 \sqrt{97-56 \sqrt{3}}-7 \sqrt{3(97-56 \sqrt{3})}\right.$\\$\displaystyle \left.+3(-3064+1769 \sqrt{3})) \sqrt{97+56 \sqrt{3}}\right)$}} \\
		\hline
		21+21&\gape{\tabincell{c}{$\displaystyle 4(-2+\sqrt{6})$\\$\displaystyle \ \cdot \ \sqrt{98-40\sqrt{6}}$}}&\gape{\tabincell{c}{$\displaystyle \frac{2}{3}\left(-26 \sqrt{147-60 \sqrt{6}}+39 \sqrt{98-40 \sqrt{6}}\right.$\\$\displaystyle\left. +(-7035 \sqrt{2}+5744 \sqrt{3}) \sqrt{49+20 \sqrt{6}}\right)$}}\\
		\hline
		22+11 & \gape{$\displaystyle \sqrt{39-\frac{45 \sqrt{3}}{2}}$}&\gape{$\displaystyle \frac{1}{4}\left(7 \sqrt{52-30 \sqrt{3}}+3(-149+86 \sqrt{3}) \sqrt{52+30 \sqrt{3}}\right)$} \\
		\hline
		26+26&\gape{$\displaystyle 12 \sqrt{-8574+2378 \sqrt{13}}$} &\gape{ $\displaystyle 2(-41828+11601 \sqrt{13}) \sqrt{8574+2378 \sqrt{13}}$}\\
		\hline
		35+35&\gape{$\displaystyle 2\sqrt{14(721-228\sqrt{10})}$}&\gape{$\displaystyle 2(2\sqrt{2}-\sqrt{5})\ \sqrt{7(721-228\sqrt{10})}$}\\
		\hline
		39+39& \gape{\tabincell{c}{$\displaystyle -4(-2+\sqrt{2})$\\$\displaystyle \cdot\sqrt{6(577-408 \sqrt{2})}$}}& \gape{\tabincell{c}{$\displaystyle 44 \sqrt{3(577-408 \sqrt{2})}-22 \sqrt{6(577-408 \sqrt{2})}$\\$\displaystyle +2(-47420+33531\sqrt{2})\sqrt{3(577+408\sqrt{2})}$}}\\
		\hline
	\end{tabular}
\end{table}

\clearpage

\medskip

\section*{Acknowledgements} We would like to express our sincere thanks to professor T. Huber for providing us with preprint \cite{TDDN}.




\enlargethispage{3mm}

\begin{thebibliography}{}
	\bibitem{RAMA}  Ramanujan, S.: Modular equations and approximations to $\pi$. Quart. J. Math. \textbf{45}, 350-372 (1914)
	\bibitem{BOR}  Borwein, Jonathan M., Borwein, Peter B.: Pi and the AGM. Wiley, New York (1987)
	\bibitem{CHUD} Chudnovsky, D. V., Chudnovsky, G. V.: Approximation and complex multiplication according to Ramanujan. In: Andrews, G., Askey, R., Berndt, B., Ramanathan, K., Rankin, R. (eds.) Ramanujan Revisited, pp.
	375–472. Academic Press, Boston (1988)
	\bibitem{CCL} Chan, H. H., Chan, S. H. and Liu, Z.: Domb's numbers and Ramanujan-Sato type series for $1/\pi$. Adv. Math. \textbf{186}(2), 396-410 (2004)
	\bibitem{COOP}  Cooper, S.: Ramanujan's Theta Functions. Springer (2017)
	\bibitem{TDDN} Huber, T., Schultz, D. and Ye, D.: Ramanujan-Sato series for $1/\pi$. preprint
	\bibitem{TDD1} Huber, T., Schultz, D. and Ye, D.: Series for $1/\pi$ of level $20$. J. Number Theory \textbf{188}, 121-136 (2018)
	\bibitem{TDD2} Huber, T., Schultz, D. and Ye, D.: Level $17$ Ramanujan-Sato series. Ramanujan J. \textbf{52}(2), 303-322 (2019)
	\bibitem{CONO} Conway, J., Norton, S. P.: Monstrous moonshine. Bull. London Math. Soc. \textbf{11}, 308-339 (1979)
	\bibitem{CHYU}  Chen, I., Yui, N.: Singular values of Thompson series. In: Arasu, K. T., Dillon, J. F., Harada, K., Sehgal, S., and Solomon, R. (eds.), Groups, difference sets, and the monster (Columbus, OH, 1993), Ohio State Univ. Math. Res. Inst. Publ., 4, de
	Gruyter, Berlin, Germany, 255-326 (1996)
	\bibitem{CON} Conway, J., McKay, J. and Sebbar, A.: On the discrete groups of Moonshine. Proc. Amer. Math. Soc. \textbf{132}, 2233-2240 (2004)
\end{thebibliography}

\begin{thebibliography}  {3}
	
	\bibitem[1]{BS} M. Balazard, E. Saias, {\it The Nyman-Beurling equivalent form for the Riemann hypothesis.} 
	Expo. Math. {\bf 18} (2000), 131--138. 
	
	\bibitem[2]{BC1} S. Bettin, J.B. Conrey, {\it A reciprocity formula for a cotangent sum.}
	Int. Math. Res. Not. IMRN {\bf 24} (2013) 5709--5726.
	
	\bibitem[3]{BC2} S. Bettin, J.B. Conrey, {\it Period functions and cotangent sums.} 
	Algebra Number Theory~{\bf 7} (2013), 215--242.
	
	\bibitem[4]{Be1} A. Beurling, {\it On two problems concerning linear transformations in Hilbert 
	space.} Acta Math. {\bf 81} (1949), 239--255.  
	
	\bibitem[5]{Be2} A. Beurling, {\it A closure problem related to the Riemann zeta-function.}
	Proc. PNAS {\bf 41} (1955), 312--314.  
	
	\bibitem[6]{BLZsl} R. Bruggeman, J. Lewis and D. Zagier, {\it Function theory related to the group~$PSL(2,\R)$.} In 
	{\it From Fourier Analysis and Number Theory to Radon Transforms and Geometry. In memory of Leon Ehrenpreis}, H.M. Farkas,
	R.C. Gunning, M.I. Knopp, and B.A. Taylor (eds.), Springer, Developments in Mathematics {\bf 28} (2013), 107--201. 
	
	\bibitem[7]{BLZ} R. Bruggeman, J. Lewis and D. Zagier, {\it Period functions for Maass wave forms 
	and cohomology}. Mem. Amer. Math. Soc.~{\bf 237} (2015), xii+132 pages.
	
	\bibitem[8]{Gar} J.B. Garnett, {\it Bounded Analytic Functions}, Springer 2007.
	
	\bibitem[9]{GM} D. Gr\"unberg and P. Moree, {\it Sequences of enumerative geometry: congruences 
	and asymptotics}, with an appendix by D.~Zagier. Experim. Math. {\bf 17} (2008), 409--426.
	
	\bibitem[10]{Ha} P. Halmos, V. Sunder, {\it Bounded Integral Operators on $L^2$-Spaces}. Springer (1978).
	
	\bibitem[11]{Ho} K. Hoffman, {\it Banach Spaces of Analytic Functions}. Prentice Hall (1962).
	
	\bibitem[12]{L} J. Lewis, {\it Eigenfunctions on symmetric spaces with distribution-valued
	boundary forms}, J.~Func.~Anal.~{\bf 29} (1978), 287--307.
	
	\bibitem[13]{LZ} J. Lewis and D. Zagier, {\it Period functions for Maass wave forms~I}.
	Annals of Math. {\bf 153} (2001), 191--258.
	
	\bibitem[14]{Sch} J. H. Schwarz, {\it The generalized Stieltjes transform and its inverse}, 
	J. Math. Physics {\bf 46}  (2005), 8~pp.
	
	\bibitem[15]{Eis} D. Zagier, {\it Eisenstein series and the Riemann zeta function}, in
	{\it Automorphic Forms, Representation Theory and Arithmetic}, Springer (1981), 275--301.
	
	\bibitem[16]{QMF} D. Zagier, {\it Quantum modular forms.}  In {\it Quanta of Maths: Conference 
	in Honor of Alain Connes}, Clay Mathematics Proceedings {\bf 11}, 
	AMS and Clay Mathematics Institute (2010) 659--675.
	
	\end{thebibliography}
\end{document}